\newcommand\CC{\mathbb{C}}
\newcommand\NN{\mathbb{N}}
\newcommand\go{G^{(0)}}
\newcommand\ho{H^{(0)}}
\newcommand\lo{L^{(0)}}
\newcommand\op{\operatorname{op}}
\newcommand\lsp{\operatorname{span}}
\newcommand\supp{\operatorname{supp}}
\newcommand\KP{\operatorname{KP}}
\newcommand\sing{\operatorname{sing}}
\newcommand\fin{\operatorname{fin}}
\newcommand{\cs}{\ensuremath{C^{*}}}
\theoremstyle{plain}
\newtheorem{theorem}{Theorem}[section]
\newtheorem{cor}[theorem]{Corollary}
\newtheorem{lemma}[theorem]{Lemma}
\newtheorem{prop}[theorem]{Proposition}
\theoremstyle{remark}
\newtheorem{rmk}[theorem]{Remark}
\newtheorem{example}[theorem]{Example}
\theoremstyle{definition}
\newtheorem{dfn}[theorem]{Definition}
\numberwithin{equation}{section}
\title{Equivalent groupoids have Morita equivalent Steinberg algebras}
\author{Lisa Orloff Clark}
\address{Lisa Orloff Clark\\
    Department of Mathematics and Statistics\\
    University of Otago\\
    PO Box 56\\
    Dunedin 9054\\
    New Zealand}
\email{lclark@maths.otago.ac.nz}
\author{Aidan Sims}
\address{Aidan  Sims\\
    School of Mathematics and Applied Statistics\\
    University of Wollongong\\
    NSW 2522\\
    Australia}
\email{asims@uow.edu.au}
\thanks{This research was supported by the Australian Research Council.}
\subjclass{16S99 (Primary); 16S10, 22A22 (Secondary)}
\keywords{Groupoid; groupoid equivalence; Morita equivalence; linking algebra; Leavitt path algebra}
\date{\today}
\begin{document}
\begin{abstract}
Let $G$ and $H$ be Hausdorff ample groupoids and let $R$ be a commutative unital ring. We show that if $G$
and $H$ are equivalent in the sense of Muhly-Renault-Williams, then the associated
Steinberg algebras of locally constant $R$-valued functions with compact support 
are Morita equivalent. We
deduce that collapsing a ``collapsible subgraph" of a directed graph in the sense of
Crisp and Gow does not change the Morita-equivalence class of the associated Leavitt path
$R$-algebra, and therefore a number of graphical constructions which yield Morita
equivalent $C^*$-algebras also yield Morita equivalent Leavitt path algebras.
\end{abstract}

\maketitle

\section{Introduction}

Two groupoids $G$ and $H$ are equivalent if they act freely and properly on the left and
right (respectively) of a space $Z$ in such a way that the quotient of $Z$ by the action of $G$ is
homeomorphic to the unit space of $H$ and vice versa. It was shown in \cite{MRW87} that
if second-countable, locally compact, Hausdorff groupoids $G$ and $H$ are equivalent,
then the associated full $C^*$-algebras are Morita equivalent. This result descends to
reduced $C^*$-algebras, and also persists for groupoids which are locally Hausdorff (see
\cite{SW}). The proof of this statement in \cite{SW} proceeds by constructing a linking
groupoid $L$ from copies of $G, H, Z$ and the opposite space $Z^{\op}$ so that the
groupoid $C^*$-algebra of $L$ is a linking algebra for a $C^*(G)$--$C^*(H)$-imprimitivity
bimodule.

Given a 
Hausdorff ample groupoid $G$ and a commutative unital ring $R$,
we consider the convolution $R$-algebra $A_R(G)$ of locally constant functions  with compact support
from $G$ to $R$.  We call $A_R(G)$ the \emph{Steinberg algebra} associated to $G$.
These algebras were introduced in \cite{Steinberg} as a model for discrete inverse semigroup algebras.
In the situation where $R = \CC$,  $A_{\CC}(R)$ is a dense subalgebra of $\cs(G)$.  
Complex Steinberg algebras also include complex Kumjian-Pask algebras \cite{ACaHR} and
hence complex Leavitt path algebras.   
Uniqueness theorems and simplicity criteria for complex Steinberg algebras
are established in  \cite{BCFS} and \cite{CFST}. 
These results indicate that 
the groupoid approach is a good 
unifying framework for understanding the striking similarities between the theory of graph
$C^*$-algebras and the theory of Leavitt path algebras, which have attracted a lot of attention in recent years.

In this paper we present further evidence for this viewpoint. 
First we show that all Leavitt path $R$-algebras
can be realised as Steinberg algebras (see example~\ref{ex:lpa}).
Next we show that if $G$ and $H$
are  Hausdorff ample groupoids, and if $Z$ is a
$G$--$H$ equivalence, then the linking-groupoid construction of \cite{SW} yields another
 Hausdorff ample groupoid $L$.  We then
show that the Steinberg algebra $A_R(L)$ is, in the
appropriate sense, a linking algebra for a surjective Morita context between $A_R(G)$ and
$A_R(H)$, and hence that these two algebras are Morita equivalent.

We conclude by applying our result to the ``collapsible subgraph" construction of Crisp
and Gow \cite{CG}. They identify a specific type of subgraph $T$ of a countable directed
graph $E$ and a collapsing process that yields a new graph $F$ with vertices $E^0
\setminus T^0$, and show that $C^*(E)$ and $C^*(F)$ are Morita equivalent by realising
one as a full corner of the other. We show that this is an instance of the
Morita-equivalence theorem of \cite{MRW87} using the notion of an abstract transversal of
the groupoid of $E$ (see \cite[Example~2.7]{MRW87}). We conclude that for
arbitrary directed graphs $E$ and commutative unital rings $R$, Crisp and Gow's collapsible
subgraph construction yields Morita equivalent Leavitt path $R$-algebras $L_R(E)$ and
$L_R(F)$.

\section{Preliminaries}

A groupoid is a small category in which every morphism has an inverse. Given a groupoid
$G$, we write $r(\alpha)$ and $s(\alpha)$ for the \emph{range} and \emph{source} of
$\alpha \in G$. We call the common image of $r$ and $s$ the \emph{unit space} of $G$ and
denote it $ \go$. We identify the set of identity morphisms of $G$ with $\go$.

An \emph{\'etale} groupoid is a groupoid $G$ endowed with a topology so that composition
and inversion are continuous, and the source map $s$ is a local homeomorphism.  In this
case, $r$ is also a local homeomorphism and there is a basis of \emph{open bisections}; that is, a basis
of sets $B\subseteq G$ such that $s$ and $r$ restricted to $B$ are homeomorphisms.
We say a  groupoid is \emph{ample} if it has a basis of compact open bisections.  Note
that a Hausdorff groupoid is ample if and only if it is locally compact, Hausdorff and \'etale 
and its unit space is totally disconnected (see \cite[Lemma~2.1]{CFST}).  See \cite{Paterson} for 
more details on \'etale and ample groupoids.

We use the notational convention that if $A,B$ are subsets of a groupoid $G$, then
\[
AB := \{\alpha\beta : \alpha \in A, \beta \in B, s(\alpha) = r(\beta)\}.
\]
If $A = \{\alpha\}$, then we write $\alpha B$ for $\{\alpha\}B$.
The \emph{orbit} of a unit $x \in \go$ is the set \[[x]:=s(xG) = r(Gx) \subseteq \go.\] An
(algebraic) isomorphism $\Phi : G \to H$ of groupoids is a bijection from $G$ to $H$ that
carries units to units, preserves the range and source maps and satisfies
$\Phi(\alpha\beta) = \Phi(\alpha)\Phi(\beta)$ whenever $\alpha$ and $\beta$ are
composable in $G$. Uniqueness of inverses implies that $\Phi(\alpha^{-1}) =
\Phi(\alpha)^{-1}$. If $G$ and $H$ are topological groupoids then an isomorphism $\Phi :
G \to H$ is an algebraic isomorphism that is also a homeomorphism.

The next example demonstrates how groupoids are useful in the study of graph algebras.

\begin{example}
\label{ex:graphgroupoid} Let $E = (E^0, E^1, r_E, s_E)$ be an arbitrary directed 
graph.\footnote{To avoid confusion, we adopt the
convention that an unadorned $r$ or $s$ will always denote the range or source map in a
groupoid, and the range and source maps associated to a graph $E$ will always be
decorated with a subscript $E$.} We denote the infinite-path space by $E^{\infty}$ and
the finite-path space by $E^*$. We use the convention that a path $x$ is a sequence of
edges $x_i$ in which each $s_E(x_i)=r_E(x_{i+1})$ and we write $|x|$ for the length of $x$.  
A \emph{source} in $E$ is a vertex $v$
such that $r_E^{-1}(v) = \emptyset$, and an infinite receiver is a vertex $v$ such that
$r_E^{-1}(v)$ is infinite.

The following construction of a groupoid $G_E$ from a graph $E$ can be found in \cite{Pat2002}.
This generalises the construction in \cite{KPRR}. 
Unlike \cite{KPRR} and \cite{Pat2002}, we do not require our graphs to be countable.  
More general versions are described
in \cite{FMY,KP, RSWY,Y}.

Define
\[
X := E^\infty \cup \{\mu \in E^* \mid s_E(\mu)\text{ is a source}\} \cup 
	\{\mu \in E^* \mid s_E(\mu)\text{ is an infinite receiver}\}.
\]
Let
\[G_E := \{(\alpha x, |\alpha| - |\beta|, \beta x) \mid \alpha,\beta \in E^*,
    x \in X, s_E(\alpha) = s_E(\beta) = r_E(x)\}
\]
We view each $(x,k,y) \in G_E$ as a morphism with range $x$ and source $y$. The formulas
\[
    (x,k,y)(y,l,z) := (x, k+l, z) \quad\text{ and }\quad (x,k,y)^{-1} := (y,-k,x)
\]
define composition and inverse maps on $G_E$ making it a groupoid with 
\[\go_E =\{(x, 0, x) : x \in X\} \text{ which we identify with $X$}.\]
Next, we describe a topology on $G$. For $\mu \in E^*$, the cylinder set $Z(\mu) \subseteq X$
is the set
\[
    Z(\mu):= \{\mu x \mid x \in X, s_E(\mu)=r_E(x)\}.
\]
For $\mu \in E^*$ and a finite $F \subseteq r_E^{-1}(s_E(\mu))$, define
\[
Z(\mu\setminus F) := Z(\mu) \cap \Big(\bigcup_{\alpha \in F} Z(\mu\alpha)\Big)^c.
\]
The sets $Z(\mu \setminus F)$ are a basis of compact open sets for a locally
compact, Hausdorff topology on $X = \go_E$ (see \cite[Theorem~2.1]{Webster:xx11}).

For $\mu,\nu \in E^*$ with $s_E(\mu) = s_E(\nu)$, and for a finite $F \subseteq E^*$ such
that $s_E(\mu)=r_E(\alpha)$ for all $\alpha \in F$, we define
\[
Z(\mu,\nu) := \{(\mu x, |\mu| - |\nu|, \nu x) : x \in X, s_E(\mu)= r_E(x)\},
\]
and then
\[
Z((\mu,\nu) \setminus F) := Z(\mu,\nu) \cap
            \Big(\bigcup_{\alpha \in F} Z(\mu\alpha,\nu\alpha)\Big)^c.
\]
The $Z((\mu,\nu) \setminus F)$ form a basis of compact open sets for a locally
compact Hausdorff topology on $G_E$ under which it is \'etale.  Hence, $G_E$ is 
a Hausdorff ample groupoid.   We will come back to this
example in Example~\ref{ex:lpa} and again in Section~\ref{SectionGraphApp}.
\end{example}

\section{Steinberg algebras over commutative rings with 1}

Throughout this section, $R$ denotes a commutative unital ring, $\Gamma$ denotes a
discrete group, $G$ denotes a  Hausdorff ample groupoid, and $c$ denotes a continuous 
homomorphism from $G$ to $\Gamma$; that is, $c : G \to \Gamma$ is a continuous groupoid 
\emph{cocycle}. The Steinberg algebra $A(G)$ of $G$, introduced in \cite{Steinberg}
\footnote{Steinberg's notation is $RG$, but we continue to use the notation of 
\cite{BCFS, CFST}.} is the $R$-algebra of locally constant $R$-valued functions on $G$ 
with compact support, where addition is pointwise and multiplication
is given by convolution 
\[
	(f * g)(\gamma) = \sum_{\alpha\beta = \gamma} f(\alpha)g(\beta).
\]  
It is useful to note that 
\[
	A_R(G) = \lsp\{1_U : U \text{ is a compact open bisection of } G\} \subseteq R^G,
\]
where $1_U$ denotes the characteristic function on $U$
(see \cite[Proposition~4.3]{Steinberg}). We have
\[
 1_U * 1_V = 1_{UV}
\] for compact open bisections $U$ and $V$
(see \cite[Proposition~4.5(3)]{Steinberg}).

\begin{lemma}\label{lem:grading}
Suppose that $R$ is a commutative unital ring, $G$ is a Hausdorff ample groupoid and 
$c:G \to \Gamma$ is a continuous cocycle. The subsets
\[
    A_R(G)_n:= \{f \in A_R(G) : \supp(f)\subseteq c^{-1}(n)\}
\]
for $n \in \Gamma$ form a $\Gamma$-grading of $A_R(G)$.
\end{lemma}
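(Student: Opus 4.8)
The plan is to verify the two defining properties of a $\Gamma$-grading: that $A_R(G)$ decomposes as the direct sum $\bigoplus_{n \in \Gamma} A_R(G)_n$ as an $R$-module, and that $A_R(G)_m * A_R(G)_n \subseteq A_R(G)_{mn}$ for all $m,n \in \Gamma$. Both will follow from the spanning description $A_R(G) = \lsp\{1_U : U \text{ a compact open bisection}\}$ together with the fact that $c$ is continuous and $\Gamma$ is discrete, so that $c^{-1}(n)$ is clopen in $G$ for every $n$.

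First I would handle the direct-sum decomposition. Each $A_R(G)_n$ is clearly an $R$-submodule. For the sum being everything: given a compact open bisection $U$, continuity of $c$ and discreteness of $\Gamma$ imply $c(U)$ is a finite subset $\{n_1,\dots,n_k\}$ of $\Gamma$ and each $U_i := U \cap c^{-1}(n_i)$ is again a compact open bisection, with $U$ the disjoint union of the $U_i$; hence $1_U = \sum_i 1_{U_i}$ with $1_{U_i} \in A_R(G)_{n_i}$. Since such $1_U$ span $A_R(G)$, we get $A_R(G) = \sum_{n} A_R(G)_n$. For directness, suppose $f = \sum_{n} f_n$ with $f_n \in A_R(G)_n$ and only finitely many nonzero; the supports lie in the pairwise disjoint sets $c^{-1}(n)$, so evaluating at any $\gamma \in c^{-1}(m)$ gives $f(\gamma) = f_m(\gamma)$, whence $f = 0$ forces every $f_m = 0$.

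For the multiplicativity of the grading, it suffices by bilinearity of convolution and the spanning set to check it on characteristic functions of compact open bisections: if $\supp(1_U) = U \subseteq c^{-1}(m)$ and $\supp(1_V) = V \subseteq c^{-1}(n)$, then $1_U * 1_V = 1_{UV}$, and for any $\gamma = \alpha\beta$ with $\alpha \in U$, $\beta \in V$ we have $c(\gamma) = c(\alpha)c(\beta) = mn$ since $c$ is a homomorphism; thus $UV \subseteq c^{-1}(mn)$ and $1_{UV} \in A_R(G)_{mn}$. The same argument applies to a general $f \in A_R(G)_m$ and $g \in A_R(G)_n$ directly from the convolution formula, since every term $f(\alpha)g(\beta)$ contributing to $(f*g)(\gamma)$ forces $\alpha \in \supp(f) \subseteq c^{-1}(m)$ and $\beta \in \supp(g) \subseteq c^{-1}(n)$, so $\supp(f*g) \subseteq c^{-1}(mn)$.

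There is no serious obstacle here; the only point requiring a little care is the observation that $U \cap c^{-1}(n)$ is a compact open bisection and that $c(U)$ is finite, which is exactly where one uses that $c$ is continuous, $\Gamma$ carries the discrete topology, and $U$ is compact. Everything else is bookkeeping with the convolution product and the support conditions.
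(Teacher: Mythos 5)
Your proposal is correct and follows essentially the same route as the paper: decompose $1_U$ for a compact open bisection $U$ into the finitely many pieces $U \cap c^{-1}(n)$ (using compactness of $U$ and that the $c^{-1}(n)$ are disjoint clopen sets), and obtain the grading property from $\supp(f*g) \subseteq \supp(f)\supp(g) \subseteq c^{-1}(m)c^{-1}(n) \subseteq c^{-1}(mn)$. The only difference is cosmetic: you spell out the directness of the sum from disjointness of supports and the intermediate check on indicator functions, both of which the paper leaves implicit.
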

\begin{proof}
We must show that:
\begin{enumerate} \item\label{it1:grading} $A_R(G) = \bigoplus_{n \in \Gamma} A_R(G)_n$
    as an $R$-module; and
\item \label{it2:grading} if $f \in A_R(G)_n$ and $g \in A_R(G)_m$ then $f*g \in
    A_R(G)_{n+m}$.
\end{enumerate}
Fix a compact
open bisection $U \subseteq G$.
For~(\ref{it1:grading}), it suffices to show that the indicator function $1_U$ 
belongs to $\bigoplus_{n \in \Gamma} A_R(G)_n$.  For $n \in \Gamma$, let $V_n := U \cap c^{-1}(n)$. Since
the $c^{-1}(n)$ are disjoint clopen sets and $U$ is compact open, the $V_n$ are disjoint compact
open subsets of $U$. Further, since $U$ is compact, only finitely many $V_n$ are nonempty, and
then $1_U = \sum_{V_n \not= \emptyset} 1_{V_n} \in \bigoplus_{n \in \Gamma} A_R(G)_n$.

For~(\ref{it2:grading}), suppose that $f \in A_R(G)_n$ and $g \in A_R(G)_m$. For
$\gamma \in G$ we have $(f * g)(\gamma) = \sum_{\alpha\beta = \gamma} f(\alpha)g(\beta)$,
and so \[\supp(f*g) \subseteq \supp(f)\supp(g) \subseteq c^{-1}(n)c^{-1}(m) \subseteq
c^{-1}(n+m).\]  Therefore $f*g \in A_R(G)_{n+m}$.
\end{proof}

\begin{example} 
\label{ex:lpa}
Every Leavitt path algebra is a Steinberg algebra.
To see this, let $E$ be an arbitrary directed graph, $G_E$ the groupoid of
Example~\ref{ex:graphgroupoid} and $R$ a commutative unital ring.  
We show that the Leavitt path algebra $L_R(E)$ is isomorphic
to $A_R(G_E)$. It is routine to check that the indicator functions $q_v
:= 1_{Z(v)}$, $v \in E^0$ are mutually orthogonal idempotents, and that the indicator
functions $t_e := 1_{Z(e, s(e))}$ and $t_{e^*} = 1_{Z(s(e), e)}$ constitute a Leavitt
$E$-family as in \cite[Definition~2.4]{Tomforde}. So the universal property of $L_R(E)$ gives a 
homomorphism $\pi : L_R(E) \to
A_R(G_E)$ satisfying $\pi(p_v) = q_v$, $\pi(s_e) = t_e$ and $\pi(s_{e^*}) = t_{e^*}$. An
application of the graded uniqueness theorem \cite[Theorem~4.8]{Tomforde} shows that this homomorphism
is injective. To see that it is surjective, observe that each $1_{Z((\mu,\nu) \setminus
F)} = t_\mu t_{\nu^*} - \sum_{\alpha \in F} t_{\mu\alpha} t_{(\nu\alpha)^*}$ belongs to
the range of $\pi$. Fix a compact open $U$. This $U$ can be written as a union of basic open
sets (because it is open), and therefore as a finite union of basic open sets (because it
is compact); say $U = \bigcup_{(\mu,\nu, F) \in \mathcal{F}} Z((\mu,\nu) \setminus F)$.
We claim that $U$ can be written as a disjoint union of basic open sets.  
By the inclusion-exclusion principle,
\[
U = \bigsqcup_{\emptyset \not= \mathcal{G} \subseteq \mathcal{F}}
    \bigg(\Big(\bigcap_{(\mu,\nu,P) \in \mathcal{G}} Z((\mu,\nu)\setminus P)\Big)
        \setminus \Big(\bigcup_{(\eta,\zeta,Q) \in \mathcal{F} \setminus \mathcal{G}}
            Z((\eta,\zeta)\setminus Q)\Big)\bigg)
\]
For any $\mu,\nu,\alpha,\beta \in E^*$ with $s(\mu) = s(\nu)$ and $s(\alpha) = s(\beta)$,
we have
\begin{align*}
Z(\mu,\nu) \cap Z(\alpha,\beta)
    &= \begin{cases}
        Z(\alpha,\beta) &\text{ if $\alpha = \mu\tau$ and $\beta = \nu\tau$} \\
        Z(\mu,\nu) &\text{ if $\mu = \alpha\tau$ and $\nu = \beta\tau$} \\
        \emptyset &\text{ otherwise,}
    \end{cases} \intertext{and}
Z(\mu,\nu) \setminus Z(\alpha,\beta)
    &= \begin{cases}
        Z((\mu,\nu) \setminus \{\tau\}) &\text{ if $\alpha = \mu\tau$ and $\beta = \nu\tau$} \\
        \emptyset &\text{ otherwise.}
    \end{cases}
\end{align*}
Using this, de Morgan's laws and distributivity of intersection and union, it is routine
to check that every set of the form $\bigcap_{(\mu,\nu,P) \in \mathcal{G}} Z((\mu,\nu)
\setminus P) \setminus (\bigcup_{(\eta,\zeta,Q) \in \mathcal{H}} Z((\eta,\zeta), Q))$ with
$\mathcal{G, H}$ finite and $\mathcal{G}$ nonempty can be written as a finite disjoint
union of basic open sets. Hence $U$ can be written as a finite disjoint union of basic
open sets as claimed. Thus $1_U$ is a finite sum of indicator functions of basic open sets, and
therefore belongs to the range of $\pi$. That is, $\pi$ is an isomorphism of $L_R(E)$
onto $A_R(G_E)$ as required.
\end{example}

\begin{rmk}
If $\Lambda$ is a row-finite $k$-graph with no sources and $G_{\Lambda}$ is the
associated groupoid (see for example \cite{KP} and \cite{FMY}), the 
\cite[Proposition~4.3]{CFST} shows that $A_{\mathbb{C}}(G_{\Lambda})$ is isomorphic 
to the Kumjian-Pask algebra $\KP_{\mathbb{C}}(\Lambda)$ as defined in \cite{ACaHR}. 
An argument similar to that of the preceding example generalises this to the 
Kumjian-Pask $R$-algebras associated to a locally convex row-finite $k$-graphs 
(possibly with sources) as in \cite{CFaH}.  That is $\KP_{R}(\Lambda) \cong 
A_{R}(G_{\Lambda})$.
\end{rmk}

\section{Groupoid equivalence}

In this section, we assume throughout that $G$ is
a locally compact Hausdorff groupoid and $X$ is a locally compact Hausdorff space.
We say $G$ \emph{acts on the left} of $X$ if there is a map $r_X$ from $X$
onto $\go$ and a map  $(\gamma,x) \mapsto \gamma \cdot x$ from
\[
G*X:= \{(\gamma,x) \in G \times X : s(\gamma)=r_X(x) \} \text{ to } X
\]
such that
\begin{enumerate}
\item if $(\eta, x) \in G * X$ and $(\gamma,\eta)$ is a composable pair in $G$, 
	then  $(\gamma\eta, x),(\gamma, \eta \cdot x) \in G * X$ and 
	$\gamma \cdot(\eta \cdot x) = (\gamma \eta) \cdot x$;
\item $r_X(x) \cdot x = x$ for all $x \in X$.
\end{enumerate}
We will call
$X$ a \emph{continuous left $G$-space} if $r_X$ is an open map and both $r_X$ and $(\gamma,x)
\mapsto \gamma \cdot x$ are continuous.

The action of $G$ on $X$ is \emph{free} if $\gamma \cdot x = x$ implies $\gamma = r_X(x)$.
It is \emph{proper} if the map from $G*X \to X \times X$ given by $(\gamma, x) \to
(\gamma \cdot x, x)$ is a proper map in the sense that inverse images of compact sets
are compact.

We define right actions similarly, writing $s_X$ for the map from $X$ onto $\go$, and
\[
    X*G:= \{(x, \gamma) \in X \times G : s_X(x) = r(\gamma)\}.
\]

\begin{dfn}
\label{def:gheq}
Let $G$ and $H$ be locally compact Hausdorff groupoids.  A \emph{$(G,H)$-equivalence} is a locally compact Hausdorff
space $Z$ such that
\begin{enumerate}
 \item \label{it:gheq1} $Z$ is a free and proper left $G$-space;
\item \label{it:gheq2} $Z$ is a free and proper right $H$-space;
\item \label{it:gheq3} the actions of $G$ and $H$ on $Z$ commute;
\item\label{it:gheq4} $r_Z$ induces a homeomorphism of $Z/H$ onto $\go$;
\item \label{it:gheq5} $s_Z$ induces a homeomorphism of $G\setminus Z$ onto $\ho$.
\end{enumerate}
\end{dfn}

Suppose that $Z$ is a $(G,H)$-equivalence, and that $y, z, y', z' \in Z$ satisfy $s_Z(y) = r_Z(z)$ and
$s_Z(z') = r_Z(y')$. We write ${_G[y,z]} \in G$ and
$[y',z']_H \in H$ for the unique elements such that
\begin{equation}\label{eq:bracket}
{_G[y,z]} \cdot z = y \text{ and } y' \cdot [y',z']_H = z'.
\end{equation}
Let
\[
    Z^{\op}:= \{\overline{z}: z \in Z\}
\]
denote a homeomorphic copy of $Z$. For $z \in Z$, define $r_{Z^{\op}}(\overline{z}) = s_Z(z) \in \ho$
and $s_{Z^{\op}}(\overline{z})= r_Z(z) \in \go$, and for $\eta \in H$ with $s(\eta) =
r_{Z^{\op}}(\overline{z})$ and $\gamma \in G$ with $r(\gamma) = s_{Z^{\op}}(\overline{z})$  define
\[
\eta \cdot \overline{z} := \overline{z \cdot \eta^{-1}}\quad
    \text{ and }\quad
    \overline{z} \cdot \gamma := \overline{\gamma^{-1} \cdot z}.
\]
With this structure, $Z^{\op}$ is an $(H,G)$-equivalence. See \cite{G, MRW87, SW}
for more information on groupoid actions and equivalences.

\begin{rmk}
 Note that if $S$ and $T$ are strongly Morita equivalent inverse semigroups as in
 \cite[Definition~2.1]{Steinberg2},
 then their respective universal groupoids are equivalent \cite[Theorem~4.7]{Steinberg2}.
\end{rmk}

\subsection*{The linking groupoid}
Now suppose that $G$ and $H$ are Hausdorff ample groupoids  and let $Z$ be a $(G,H)$-equivalence. We show that
$A_R(G)$ and $A_R(H)$ are Morita equivalent by embedding them as complementary corners of
the Steinberg algebra of a \emph{linking groupoid} $L$ defined below. In the remainder of
this section, we verify that the linking groupoid in this situation is also a  Hausdorff 
ample groupoid and then
show how $A_R(G)$ and $A_R(H)$ embed into $A_R(L)$.

If $Z$ is a $(G,H)$-equivalence, the \emph{linking groupoid of $Z$} is
defined in \cite[Lemma~2.1]{SW} as
\[
L:= G \sqcup Z \sqcup Z^{\op} \sqcup H,
\]
with $r,s:L \to \lo := \go \sqcup \ho$ inherited from the range and source maps on each
of $G, H, Z$ and $Z^{\op}$.  We write $r$ and $s$ (no subscripts)
 to denote the range and source maps in $L$.  Multiplication $(k,l) \mapsto kl$ in $L$ is given by
\begin{itemize}
\item multiplication in $G$ and $H$ when $(k,l)$ is a composable pair in $G$ or $H$; 
\item  $kl = k \cdot l$ when $(k,l) \in Z * H \sqcup G * Z \sqcup H * Z^{\op} \sqcup Z^{\op} * G;$ and
\item $kl = {_G[k,h]}$ if $k \in Z$ and $l = \overline{h} \in Z^{\op}$, and $kl = [h,l]_H$ if $l \in Z$
	and $k = \overline{h} \in Z^{\op}$.
\end{itemize}
The inverse map is the usual inverse map in each of $G$ and $H$ and is given by $z
\mapsto \overline{z}$ on $Z$ and $\overline{z} \mapsto z$ in $Z^{\op}$.
Both $G$ and $H$ are clopen in $L$ by construction.

\begin{lemma}
Let $G$ and $H$ be Hausdorff ample groupoids. Suppose that $Z$ is a $(G,H)$-equivalence and $L$ is the
linking groupoid of $Z$.  Then $L$ is a  Hausdorff ample groupoid.
\end{lemma}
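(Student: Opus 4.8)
The plan is to verify the three defining properties of a Hausdorff ample groupoid for $L$ in turn: that it is Hausdorff, that it is locally compact and \'etale, and that its unit space is totally disconnected; by the characterisation recalled after Example~\ref{ex:graphgroupoid} (namely \cite[Lemma~2.1]{CFST}), these three together give ampleness. The topology on $L$ is the disjoint-union topology coming from the four clopen pieces $G$, $H$, $Z$ and $Z^{\op}$, each of which is locally compact Hausdorff by hypothesis (for $G$ and $H$) or by Definition~\ref{def:gheq} (for $Z$, and hence for its homeomorphic copy $Z^{\op}$). A finite disjoint union of locally compact Hausdorff spaces is locally compact Hausdorff, so $L$ is locally compact Hausdorff immediately. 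For total disconnectedness of $\lo = \go \sqcup \ho$: each of $\go$ and $\ho$ is totally disconnected because $G$ and $H$ are ample, and a disjoint union of two totally disconnected spaces is totally disconnected, so $\lo$ is too.

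The substantive point is that $L$ is \'etale, i.e.\ that the source map $s : L \to \lo$ is a local homeomorphism and multiplication and inversion are continuous; this is exactly \cite[Lemma~2.1]{SW}, which establishes that $L$ is a locally compact Hausdorff \'etale groupoid whenever $G$ and $H$ are. So I would simply invoke that lemma for the \'etale structure and the continuity of the operations, and then observe that \'etale plus totally disconnected unit space plus locally compact Hausdorff is ample. Alternatively, to keep the argument self-contained, one can exhibit a basis of compact open bisections directly: compact open bisections of $G$ and of $H$ remain compact open bisections in $L$ since $G$ and $H$ are clopen in $L$; and for the $Z$ and $Z^{\op}$ pieces one uses that the maps ${_G[\cdot,\cdot]}$ and $[\cdot,\cdot]_H$ of \eqref{eq:bracket}, together with the action maps, are continuous and open, so that images of small compact open sets under the relevant local homeomorphisms furnish compact open bisections; taking unions over the four pieces shows these sets form a basis.

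The main obstacle, such as it is, is bookkeeping rather than depth: one must be careful that a ``bisection'' in $L$ is allowed to meet more than one of the four summands (for instance a compact open bisection can have part of its range in $\go$ and part in $\ho$), and check that the compact open bisections within each summand are genuinely open and compact in the topology of $L$ — this is where clopenness of the four pieces is used — and that $r$ and $s$ restricted to them are homeomorphisms onto compact open subsets of $\lo$. Since everything needed about the $Z$ and $Z^{\op}$ pieces (freeness, properness, continuity and openness of the structure maps, and the fact that $Z^{\op}$ is an $(H,G)$-equivalence) has been recorded above or in \cite{SW}, I expect the proof to be short: cite \cite[Lemma~2.1]{SW} for the locally compact Hausdorff \'etale structure, note total disconnectedness of $\lo$ coming from ampleness of $G$ and $H$, and conclude with \cite[Lemma~2.1]{CFST}.
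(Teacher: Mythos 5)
Your reduction matches the paper's (locally compact Hausdorff, plus totally disconnected unit space, plus \'etale, then conclude via \cite[Lemma~2.1]{CFST}), and the easy parts --- local compactness and Hausdorffness of $L$, and total disconnectedness of $\lo = \go \sqcup \ho$ --- are fine. But the crucial step, that $L$ is \'etale, is exactly where your argument has a genuine gap. You invoke \cite[Lemma~2.1]{SW} for ``the locally compact Hausdorff \'etale structure,'' but that lemma is proved in the setting of locally compact Hausdorff groupoids with Haar systems and asserts only that the linking groupoid is a locally compact Hausdorff groupoid carrying a Haar system induced from those on $G$ and $H$; it neither assumes nor concludes that anything is \'etale. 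The paper cites it only for local compactness and Hausdorffness, and its footnote makes precisely this point: even the indirect route (the induced Haar system consists of counting measures, so one could appeal to \cite[Proposition~I.2.8]{Renault}) requires second countability, which is not assumed in this paper. Your fallback sketch does not fill the hole either: saying that images of small compact open sets ``under the relevant local homeomorphisms'' furnish compact open bisections through points of $Z$ and $Z^{\op}$ presupposes that $r$ and $s$ restricted to $Z$ are local homeomorphisms, which is the very point at issue. Continuity and openness of $r_Z$, and continuity of the bracket maps of \eqref{eq:bracket}, do not by themselves give local injectivity of $r$ on $Z$.

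What is needed, and what the paper actually supplies, is an argument for local injectivity of $r$ at points of $Z$ (the case of $Z^{\op}$ being symmetric): supposing it fails at $z \in Z$, one extracts nets $x_\alpha \neq y_\alpha \to z$ in $Z$ with $r(x_\alpha) = r(y_\alpha) \in \go$, sets $\gamma_\alpha := [x_\alpha, y_\alpha]_H$ so that $x_\alpha \cdot \gamma_\alpha = y_\alpha$, passes to a subnet with $\gamma_\alpha \to \gamma \in H$ (using openness of $r : H \to \ho$, via \cite[Proposition~1.15]{tf2b}), and then continuity of the action gives $z \cdot \gamma = z$, freeness of the $H$-action forces $\gamma = s(z) \in \ho$, and openness of $\ho$ in $H$ forces $\gamma_\alpha \in \ho$, hence $x_\alpha = y_\alpha$, eventually --- a contradiction. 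Your proposal contains no substitute for this use of freeness of the action together with openness of the unit space; until such an argument (or an honest direct construction of compact open bisections through points of $Z$) is added, the \'etaleness of $L$, and hence the lemma, is unproved.
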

\begin{proof}
Lemma~2.1 of \cite{SW} implies that $L$ is locally compact and Hausdorff.  
It suffices to show that $L$ is \'etale with totally disconnected unit space.
We have $\lo =\go \sqcup \ho$ which is totally disconnected because $\go$ and $\ho$ are, 
so it remains to show that $L$ is
\'etale.\footnote{If $G$ and $H$ were second-countable, then $L$ would be as well, and
then we could deduce from \cite[Lemma~I.2.7 and Proposition~I.2.8]{Renault} that $L$ is 
\'etale by observing that $\lo$ is open in $L$ (because
each of $\go$ and $\ho$ is open), and the Haar system on $L$ induced from those on $G$
and $H$ consists of counting measures because the systems on $G$ and $H$ have this
property.}

We suppose that $r$ is not a local homeomorphism, and seek a contradiction. Then there exists $z
\in L$ such that $r$ fails to be injective on every neighbourhood of $z$.  Because
$G$ and $H$ are \'etale, $z$ is either in $Z$ or $Z^{op}$.  Without loss of generality, 
assume $z \in Z$; the case for $Z^{op}$ is symmetric. By
choosing a neighbourhood base $\{U_{\alpha}\}$ at $z$ inside of $Z$, we can find a net
$\{(x_\alpha, y_\alpha)\}$  where each $x_{\alpha}, y_{\alpha} \in  U_{\alpha}$ such that:
\begin{enumerate}
\item $x_\alpha, y_\alpha \to z$;
\item \label{it:special2} $x_\alpha \neq y_\alpha$ for all $n$;
\item $r(x_\alpha) = r(y_\alpha)$ for all $n$.
\end{enumerate}
Since $G$ is \'etale, $\go$ is
open in $L$ and so we can assume that $r(x_\alpha) \in \go$ for all
$\alpha$. For each $\alpha$, let
$\gamma_\alpha := [x_\alpha, y_\alpha]_H$, so that $x_\alpha \cdot \gamma_\alpha =
y_\alpha$ for all $\alpha$. Note that $r(\gamma_{\alpha}) = s(x_{\alpha})$.  
Proposition~1.15 of \cite{tf2b} applied to the open map $r : H \to \ho$
implies that, by passing to a subnet, we may assume that $\gamma_{\alpha} \to \gamma \in
H$. So the continuity of the action gives
\[
     z \cdot \gamma = \lim  x_{\alpha} \cdot \gamma_{\alpha} = \lim y_{\alpha} = z.
\]
Since $H$ acts freely on $Z$, this forces $\gamma = s(z)$. Since $\ho$ is
open in $H$, we have $\gamma_{\alpha} \in \ho$ eventually. Hence $x_{\alpha} =
y_{\alpha}$ eventually, contradicting~(\ref{it:special2}).
\end{proof}

Following \cite[page 108]{SW}, for each $F \in A_R(L)$, define $F_{11} = F|_{G}$, $F_{12}
= F|_{Z}$, $F_{21} = F|_{Z^{\op}}$ and $F_{22} = F|_{H}$.
We may view each $F_{ij}$ as an element of $A_R(L)$.  We express the decomposition $F =
\sum_{i,j} F_{ij}$ by writing
\[
 F = \left(\begin{matrix}
      F_{11} & F_{12} \\
F_{21} & F_{22}
     \end{matrix}\right).
\]
It is straightforward to check that convolution in $A_R(L)$ is given by matrix
multiplication for functions written in this form. 
Using this notation, we see that the inclusion maps \[f \mapsto
\left(\begin{matrix}
                       f&0\\0&0
                      \end{matrix}\right)
\text{   and  }
     g \mapsto \left(\begin{matrix}
                       0&0\\0&g
                      \end{matrix}\right)
  \]
define injective homomorphisms $A_R(G) \hookrightarrow A_R(L)$ and $A_R(H)
\hookrightarrow A_R(L)$.  We denote the images of these maps by $i(A_R(G))$ and
$i(A_R(H))$. So
\begin{equation}
\label{eq:iso}
 i(A_R(G)) \cong A_R(G) \text{ and } i(A_R(H)) \cong A_R(G).
\end{equation}

\section{Main result}

\label{sec:main}

We now have the machinery we need to show that equivalent groupoids give rise to Morita
equivalent Steinberg algebras. First, we give the definition of Morita equivalent rings.
Let $A$ and $B$ be rings, $M$ an $A$--$B$ bimodule, $N$ a $B$--$A$ bimodule, and
\[
     \psi:M \otimes_B N \to A \text{ and }\phi:N \otimes_A M \to B
\]
bimodule homomorphisms such that
\begin{equation}\label{eq-ops}
    n'\cdot \psi(m \otimes n) = \phi(n' \otimes m)\cdot n
        \text { and } m'\cdot \phi(n \otimes m) = \psi(m' \otimes n)\cdot m
\end{equation}
for  $n,n' \in N$ and $m,m' \in M$. Then  $(A,B,M,N,\psi, \phi)$ is a \emph{Morita
context} between $A$ and $B$; it is called \emph{surjective} if $\psi$ and $\phi$ are
surjective and in this case we say $A$ and $B$ are \emph{Morita equivalent}.  (See
\cite[page~41]{GS}.)

\begin{theorem}
\label{thm:lme} Let $G$ and $H$ be  Hausdorff ample groupoids.
 Suppose that $Z$ is a $(G,H)$-equivalence with linking
groupoid $L$. Let $i$ denote the inclusion maps from $A_R(G)$ and $A_R(H)$ into $A_R(L)$.  Define
\[
 M:= \{f \in A_R(L) \mid \supp f \subseteq Z\}
    \quad\text{ and }\quad
    N:= \{f \in A_R(L) \mid \supp f \subseteq Z^{\op}\},
 \]
and let $A_R(G)$ and $A_R(H)$ act on the right and left of $M$ and on the left and right
of $N$ by $a \cdot f = i(a)*f$ and $f\cdot a = f*i(a)$. Then there are bimodule
homomorphisms
\[
\psi: M \otimes_{i(A_R(H))}N \to A_R(G) \quad\text{ and }\quad
    \phi: N \otimes_{i(A_R(G))}M \to A_R(H)
\]
determined by
\[
i(\psi(f \otimes g)) = f*g \quad\text{ and }\quad
    i(\psi(g \otimes f)) = g*f.
\]
The tuple $(A_R(G), A_R(H), M, N, \psi, \phi)$ is a surjective Morita context, and so
$A_R(G)$ and $A_R(H)$ are Morita equivalent.
\end{theorem}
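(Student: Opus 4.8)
The plan is to exhibit $A_R(G)$ and $A_R(H)$ as the complementary corners $i(A_R(G))$ and $i(A_R(H))$ of $A_R(L)$ and to recognise $M$ and $N$ as the off-diagonal corners, so that the whole proof becomes an exercise in the ``matrix'' calculus for $A_R(L)$ already set up in the previous section. Since $G$, $H$, $Z$ and $Z^{\op}$ are clopen in $L$ (each being a union of compact open bisections of $L$, as one checks from the construction of the topology), restriction $F \mapsto F|_G$ etc.\ maps $A_R(L)$ onto $A_R(G)$ and similarly for the others; thus with the matrix notation $F = \left(\begin{smallmatrix} F_{11} & F_{12} \\ F_{21} & F_{22}\end{smallmatrix}\right)$ we have $i(A_R(G)) = \left\{\left(\begin{smallmatrix} a & 0 \\ 0 & 0\end{smallmatrix}\right)\right\}$, $i(A_R(H)) = \left\{\left(\begin{smallmatrix} 0 & 0 \\ 0 & b\end{smallmatrix}\right)\right\}$, $M = \left\{\left(\begin{smallmatrix} 0 & m \\ 0 & 0\end{smallmatrix}\right)\right\}$ and $N = \left\{\left(\begin{smallmatrix} 0 & 0 \\ n & 0\end{smallmatrix}\right)\right\}$. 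Because multiplication in $A_R(L)$ is matrix multiplication, the bimodule actions $a\cdot f = i(a)*f$, $f\cdot a = f*i(a)$ land back in $M$ (resp.\ $N$), and the products $M*N$, $N*M$ land in $i(A_R(G))$, $i(A_R(H))$ respectively, so the maps $\psi$, $\phi$ are well defined by the displayed formulas once we know $i$ is injective (which is \eqref{eq:iso}).

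First I would verify the claim that $G,H,Z,Z^{\op}$ are clopen in $L$ and that each $F_{ij}$ again lies in $A_R(L)$, hence that $M$, $N$ sit inside $A_R(L)$ and the restriction maps are surjective $R$-module maps; this is where the ample hypothesis and the earlier lemma that $L$ is Hausdorff ample are used. Next I would record the matrix-multiplication formula for convolution in $A_R(L)$ (already asserted in the text) and read off from it that $M$ is an $i(A_R(G))$--$i(A_R(H))$ bimodule, $N$ is an $i(A_R(H))$--$i(A_R(G))$ bimodule, and $M*N \subseteq i(A_R(G))$, $N*M \subseteq i(A_R(H))$. Then $\psi$ and $\phi$ are defined on elementary tensors by $\psi(f\otimes g) = i^{-1}(f*g)$ and $\phi(g\otimes f) = i^{-1}(g*f)$; that these descend to the balanced tensor products over $i(A_R(H))$ and $i(A_R(G))$ is immediate from associativity of convolution, i.e.\ $(f\cdot b)*g = f*(b*g)$ for $b \in i(A_R(H))$, and similarly on the other side; bimodule-linearity of $\psi,\phi$ is again associativity of $*$. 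The compatibility conditions \eqref{eq-ops}, namely $n'*(f*g) = (n'*f)*g$ and $m'*(g*f) = (m'*g)*f$, are once more just associativity in $A_R(L)$, read through $i$.

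The one genuinely non-formal point — and the step I expect to be the main obstacle — is the \textbf{surjectivity} of $\psi$ and $\phi$, i.e.\ that $M*N = i(A_R(G))$ and $N*M = i(A_R(H))$ as subalgebras (not merely as $R$-submodules it is obvious they are contained in). Equivalently, writing $M*M^{\op}$-type products in terms of the groupoid structure, one must show that $1_U$ for an arbitrary compact open bisection $U \subseteq G$ lies in the span of products $1_B * 1_C$ with $B \subseteq Z$, $C \subseteq Z^{\op}$ compact open bisections; and that the constant function $1$ on a compact open subset of $\go$ (these span $A_R(G)$ together with the $1_U$) is likewise attained. The key geometric input is that the equivalence $Z$ is \emph{full}: every unit $x \in \go$ equals $r_Z(z)$ for some $z \in Z$ (this is condition~\eqref{it:gheq4}, since $r_Z$ induces a \emph{surjection} $Z/H \to \go$), so $\go \subseteq ZZ^{\op}$ on the level of the groupoid $L$, and by étaleness plus total disconnectedness each compact open bisection $U \subseteq G \subseteq L$ can be covered by finitely many compact open bisections of the form $BC$ with $B \subseteq Z$, $C \subseteq Z^{\op}$; an inclusion–exclusion / disjointification argument (as in Example~\ref{ex:lpa}) then writes $1_U$ as a finite $R$-linear combination of such $1_{BC} = 1_B * 1_C \in M*N$. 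The symmetric argument using \eqref{it:gheq5} handles $N*M = i(A_R(H))$. Once surjectivity is in hand, all the axioms of a surjective Morita context are verified and the conclusion that $A_R(G)$ and $A_R(H)$ are Morita equivalent follows by definition.
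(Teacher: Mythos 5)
Your proposal is correct and takes essentially the same route as the paper: the bimodule structure, well-definedness of $\psi$ and $\phi$ on the balanced tensor products, and the compatibility conditions are all dispatched by associativity of convolution in the matrix picture for $A_R(L)$, and the only substantive step is surjectivity, which both you and the paper obtain from fullness of the equivalence ($r_Z(Z)=\go$) together with ampleness and a compactness/disjointification argument producing compact open bisections inside $Z$ and $Z^{\op}$. The only difference is in bookkeeping: the paper covers $r(U)$ by ranges of disjointified compact open bisections $V_i\subseteq Z$ and uses the products $1_{V_i}*1_{V_i^{\op}}$, whereas you cover $U$ itself by product bisections $BC$ with $B\subseteq Z$, $C\subseteq Z^{\op}$; for your disjointification to stay inside the spanning family you should note the small (true and easy) fact that a compact open subset $D\subseteq BC$ is again of this form, since $D=(DC^{-1})C$ with $DC^{-1}\subseteq Z$ a compact open bisection.
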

\begin{proof}
That $M$ is an $A_R(G)$--$A_R(H)$ bimodule and $N$ is an $A_R(H)$--$A_R(G)$ bimodule is
clear. The given formulas for $\phi$ and $\psi$ are well-defined on the balanced tensor
products because, for example, \[f * (a \cdot g) = f *(i(a)*g) = (f *i(a))*g = (f\cdot a) *
g.\]  The maps $\psi$ and $\phi$ are module homomorphisms by linearity of convolution. The
formula~\eqref{eq-ops} follows from associativity of convolution in $A_R(L)$.

To see that $\psi$ is surjective, it suffices to fix a compact open bisection $U
\subseteq G$ and show that $i(1_U)$ is in the image of $\psi$. For each $x \in r(U)$, choose
$z_x \in Z$ such that $r(z_x) = x$. Since $L$ is \'etale and $Z$ is topologically disjoint
from $G$, each $z_x$ has a neighbourhood $U_x \subseteq Z$ which is a bisection of $L$.
Since $\go$ is locally compact, Hausdorff and totally disconnected, each $x$ has a
compact open neighbourhood $W_x$ contained in $r(U) \cap r(U_x)$, and so by replacing
each $U_x$ with $U_x \cap r^{-1}(W_x)$, we can assume that each $U_x$ is compact open
with $r(U_x) \subseteq r(U)$. Since $r(U)$ is compact, there is a finite set $\{x_1,
\dots, x_n\} \subseteq r(U)$ such that $\bigcup_i r(U_{x_i}) = r(U)$. Let $V_1 = U_{x_1}$ and
iteratively define $V_i = U_{x_i} \setminus r^{-1}\big(\bigcup_{j < i} r(U_{x_j})\big)$. Then the
$V_i$ are compact open subsets of $Z$ on which $r$ and $s$ are bijective, and $r(U)$ is
the disjoint union of the $r(V_i)$. Therefore, writing $V_i^{\op}$ for $\{\overline{z} :
z \in V_i\} \subseteq Z^{\op}$, we have
\[
\left(\begin{matrix}
    1_U&0\\0&0
\end{matrix}\right)
    = \sum_i \left(\begin{matrix}
        0&1_{V_i}\\0&0
    \end{matrix}\right)
    \left(\begin{matrix}
        0&0\\1_{V_i^{\op}}&0
    \end{matrix}\right).
\]
Thus $1_U = \psi(\sum_i 1_{V_i} \otimes 1_{V_i^{\op}})$, and so $\psi$ is surjective. A
similar argument shows that $\phi$ is surjective.

It follows that $(A_R(G), A_R(H), M, N, \psi, \phi)$ is a surjective Morita context, and
so $A_R(G)$ and $A_R(H)$ are Morita equivalent.
\end{proof}


\section{Applications to graph algebras}

\label{SectionGraphApp}
Our aim is to apply our main result to graph algebras. 
First we consider a useful class of
examples of groupoid equivalences --- those arising from \emph{abstract transversals}
of groupoids. Suppose that $G$ is a subgroupoid\footnote{By \emph{subgroupoid} we 
mean a subset that is itself a groupoid.} of $H$ and let $Z:=\go H$. It is straight-forward 
to check that $Z$ is a free and proper left $G$-space and a free and proper right $H$-space 
where $r_Z$ and $s_Z$ are the range and source from $H$ restricted to $Z$ 
and the action is by multiplication in $H$.
Because groupoid multiplication is associative, the actions of $G$ and $H$ commute.
However, $Z$ may not satisfy the surjectivity hypothesis of Definition~\ref{def:gheq}~(5)
required in a groupoid equivalence. The following lemma is a straightforward application of
\cite[Example~2.7]{MRW87}; we give a short proof because the construction is fundamental
to our application of groupoid equivalence to graph algebras.

\begin{lemma}
\label{lem:MRWex} Suppose $H$ is an \'etale groupoid and $X \subseteq \ho$ is a clopen
subset that meets each orbit in $H$. Then $G := XHX$ is a clopen subgroupoid of $H$, and
$Z:= X H$ is a $(G,H)$-equivalence.
\end{lemma}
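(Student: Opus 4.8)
The plan is to verify the five conditions of Definition~\ref{def:gheq} for $Z = XH$, viewed as a $(G,H)$-space with $G = XHX$. The actions, range and source maps are all inherited from multiplication in $H$: $G$ acts on the left and $H$ acts on the right of $Z = XH = \{\alpha \in H : r(\alpha) \in X\}$, with $r_Z = r|_Z$ taking values in $\go = X$ and $s_Z = s|_Z$ taking values in $\ho$. First I would note that $G = XHX = \{\alpha \in H : r(\alpha), s(\alpha) \in X\}$ is a subgroupoid of $H$; it is clopen because $r$ and $s$ are continuous (hence $r^{-1}(X) \cap s^{-1}(X)$ is open) and, since $H$ is \'etale with $X$ clopen, one can check it is also closed — indeed $X$ clopen means $r^{-1}(X)$ and $s^{-1}(X)$ are clopen. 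Similarly $Z = r^{-1}(X)$ is clopen in $H$, hence a locally compact Hausdorff space in the subspace topology, and the restrictions of multiplication give continuous, open action maps; $r_Z$ is open because $r$ is a local homeomorphism. So conditions~(\ref{it:gheq1}), (\ref{it:gheq2}), (\ref{it:gheq3}) of Definition~\ref{def:gheq} — freeness, properness and commutativity of the two actions — follow essentially from the observations already made in the text preceding the lemma (for $G \subseteq H$ a subgroupoid, $\go H$ is automatically a free proper left $G$-space and free proper right $H$-space, and associativity gives commutativity); the point is just to observe these go through with $\go = X$.

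The substance is in conditions~(\ref{it:gheq4}) and~(\ref{it:gheq5}). For~(\ref{it:gheq4}): I claim $r_Z$ induces a homeomorphism $Z/H \to \go = X$. Surjectivity is immediate since $r_Z(x) = x$ for $x \in X \subseteq Z$. For injectivity of the induced map, if $\alpha, \beta \in Z$ with $r(\alpha) = r(\beta)$, then $\alpha^{-1}\beta \in H$ is defined with $\alpha = \beta \cdot (\beta^{-1}\alpha)^{-1}$, wait — more cleanly, $\beta = \alpha \cdot (\alpha^{-1}\beta)$ with $\alpha^{-1}\beta \in H$, so $\alpha$ and $\beta$ lie in the same right $H$-orbit. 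That the induced bijection is a homeomorphism follows because $r$ is an open continuous map and the quotient map $Z \to Z/H$ is open. For~(\ref{it:gheq5}): $s_Z$ induces a map $G \backslash Z \to \ho$. Injectivity of the induced map: if $\alpha, \beta \in Z$ with $s(\alpha) = s(\beta)$, then $\alpha\beta^{-1} \in H$ has $r(\alpha\beta^{-1}) = r(\alpha) \in X$ and $s(\alpha\beta^{-1}) = r(\beta) \in X$, so $\alpha\beta^{-1} \in XHX = G$ and $(\alpha\beta^{-1}) \cdot \beta = \alpha$, i.e. $\alpha, \beta$ are in the same left $G$-orbit. Surjectivity is exactly where the hypothesis that $X$ \emph{meets every orbit} is used: given $u \in \ho$, pick $h \in H$ with $s(h) = u$ and $r(h) \in [s(h)] \cap$... more precisely, pick $h \in Hu$ with $r(h) \in X$ (possible since $X$ meets the orbit $[u]$, so there is $g \in H$ with $r(g) = u$, $s(g) \in X$, and then $h = g^{-1} \in Z$ has $s(h) = s(g) \in X$... let me restate: since $X$ meets $[u]$, choose $g \in H$ with $r(g) = u$ and $s(g) \in X$; then $g^{-1} \in Z$ and $s(g^{-1}) = u$). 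Hence $s_Z$ is surjective onto $\ho$, and again openness of $s$ and of the quotient map $Z \to G\backslash Z$ upgrades the induced bijection to a homeomorphism.

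The main obstacle — really the only nonroutine point — is condition~(\ref{it:gheq5}), and within it verifying that the induced bijection $G\backslash Z \to \ho$ is a \emph{homeomorphism} rather than merely a continuous bijection; this is where I would be most careful, using that $s : H \to \ho$ is open (being a local homeomorphism, as $H$ is \'etale) together with openness of the orbit-space quotient map to see that the induced map is open, and continuity from the universal property of the quotient. Everything else — freeness, properness, the clopen-ness of $G$ and $Z$, continuity and openness of the action maps — is a direct transcription of the setup recorded just before the lemma, now carried out with the clopen set $X$ in place of the full unit space, and I would dispatch it briskly by citing \cite[Example~2.7]{MRW87}. One small sanity check worth including: properness of the actions is automatic here because both actions are restrictions of multiplication in the \'etale (hence in particular the relevant maps are local homeomorphisms) groupoid $H$, and the relevant map $G * Z \to Z \times Z$, $(\gamma, z) \mapsto (\gamma z, z)$, has the explicit continuous partial inverse $(\alpha, \beta) \mapsto (\alpha\beta^{-1}, \beta)$ on its image, so preimages of compacts are compact.
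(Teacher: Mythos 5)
Your proposal is correct and follows essentially the same route as the paper: conditions (1)--(3) are delegated to the discussion preceding the lemma, injectivity of the induced maps is obtained from $\alpha\beta^{-1}\in XHX$, surjectivity of $s_Z$ uses the hypothesis that $X$ meets every orbit, and the induced bijections are upgraded to homeomorphisms using openness of $r$, $s$ and the quotient maps. The only (immaterial) difference is that you get continuity of the induced map from the universal property of the quotient, where the paper checks directly that $\tilde{s}^{-1}(U)=G\backslash(ZU)$ is open.
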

\begin{proof}
The set $XHX = r^{-1}(X) \cap s^{-1}(X)$ is clopen because $r$ and $s$ are continuous,
and it is clearly a subgroupoid. Similarly, $Z$ is a clopen subset of $H$, and so the
open subsets of $Z$ are the subsets of $Z$ which are open in $H$. Since $H$ is \'etale,
$r$ and $s$ are open maps and so $r_Z$ and $s_Z$ (which are $r$ and $s$ restricted to $Z$) 
are also open maps. The map $r_Z : Z \to X$ is
surjective by definition. To see that $s_Z : Z \to \ho$ is surjective, fix $u \in \ho$. By
hypothesis, $[u] \cap X \not= \emptyset$, so there exists $\alpha \in H$ such that
$r(\alpha) \in X$ and $u = s(\alpha)$. So $\alpha \in Z$ and $u = s(\alpha) \in s_Z(Z)$. 

We prove that
$\tilde{s} : G \backslash Z \to \ho$ is a homeomorphism; the argument that $\tilde{r}$ is
a homeomorphism is similar. Clearly, $\tilde{s}$ is a surjection.
If $\tilde{s}([\alpha]) = \tilde{s}([\beta])$, then
$s(\alpha) = s(\beta)$, and so $\alpha \beta^{-1} \in XHX = G$ and satisfies
$(\alpha\beta^{-1})\cdot \beta = \alpha$.   So $[\alpha] = [\beta]$, and $\tilde{s}$ is
injective. 

To see that $\tilde{s}$ is continuous, suppose $U \subseteq \ho$ is open. Then $HU$ is 
open because $s$ is continuous, and then $ZU = HU \cap Z$ is open in $Z$.  Thus 
$\tilde{s}^{-1}(U) =G\backslash (ZU)$ is open by definition of the quotient topology. 

Finally, if $W \subseteq
G\backslash Z$ is open, then $W = G\backslash W'$ for some open $W' \subseteq Z$. Since
$Z$ is open in $H$, so is $W'$ and then $\tilde{s}(W) = s(W')$ is open because $s$ is
open.
\end{proof}

Given a graph $E$, Crisp and Gow identify a type of subgraph $T$ which can be
``collapsed" to yield a new graph $F$ whose $\cs$-algebra is Morita equivalent to that of
$E$ \cite{CG}. We will demonstrate that $G_E$ and $G_F$ are equivalent groupoids. Bates
and Pask's ``outsplitting'' move described in \cite[Theorem~4.5 and Corollary 5.4]{BP} is
a special case of the Crisp-Gow construction (see \cite[Example~iii]{CG}), as are
S{\o}rensen's moves (S)~and~(R) (see \cite[Propositions~3.1 and 3.2]{S}). So our result
implies that applications of these moves yield Morita equivalent Leavitt path
algebras regardless of the base ring.

When $E$ is countable, our statement of the next proposition corresponds exactly to the 
construction of \cite[Theorem~3.1]{CG} modulo the difference in edge-direction conventions.  
First, we need a few more graph preliminaries.  Suppose $E$ is a directed graph.  For $v \in
E^0$ and $S \subseteq E^0$, we write $v \geq S$ if $S E^* v \not= \emptyset$. We define
the \emph{pointed groupoid} with respect to $S$ to be the subgroupoid of $G_E$ consisting
of groupoid elements $(\alpha x, |\alpha| - |\beta|, \beta x)$ such that $r_E(\alpha), r_E(\beta)
\in S$. We define 
\[E^0_{\sing}:=
\{v \in E^0 : r_E^{-1}(v)\text{ is either empty or infinite}\}.
\]  For $n \in \NN$ we define a map $\sigma^n : \{x \in E^* \cup E^{\infty} : |x| \ge n\} \to E^*\cup E^{\infty}$ 
by $\sigma^n(\alpha
y) = y$ for all $\alpha \in E^n$ (paths of length $n$) and $y \in  E^* \cup E^{\infty}$.  Notice that $\go_E$ is invariant
under $\sigma^n$.
Finally, we say an acyclic path $x \in E^{\infty}$ is a \emph{head} if each $r_E(x_i)$  receives only
$x_i$ and each $s_E(x_i)$ emits only $x_i$.
\begin{prop}\label{prop:cg}
Let $E$ be a directed graph with no heads and suppose that $F^0 \subseteq E^0$ satisfies $E^0_{\sing} \subseteq
F^0$. Suppose also that the subgraph $T$ of $E$ defined by $T^0:= E^0 \setminus F^0$ and
\[T^1:=\{e \in E^1:r_E(e),s_E(e) \in T^0\}\]
is acyclic and that each of the following are satisfied:
\begin{itemize}
\item[(T1)] each vertex in $F^0$ is the range of at most one $y \in E^{\infty}$ such
    that $s_E(y_i) \in T^0$ for all $i \geq 1$;
\end{itemize}
and for each $x \in T^{\infty}$,
\begin{itemize}
\item[(T2)] $r_E(x)\geq F^0$
\item[(T3)] $|s_E^{-1}(r_E(x_i))|=1$ for all $i$; and
\item[(T4)] whenever $s_E(e) = r_E(x)$, we have $|r_E^{-1}(r_E(e))| < \infty$.
\end{itemize}
Let $F$ be the graph with vertex set $F^0$ and one edge $e_{\beta}$ for each path $\beta
\in E^* \setminus E^0$ with $s_E(\beta),r_E(\beta) \in F^0$ and $r_E(\beta_i) \in T^0$
for $1 \leq i < |\beta|$ such that $s_F(e_{\beta}) = s_E(\beta)$ and
$r_F(e_{\beta})=r_E(\beta)$. Let $G \subseteq G_E$ denote the pointed groupoid with
respect to $F^0$. Then
\begin{enumerate}
\item \label{it:cg1} $G$ and $G_E$ are equivalent groupoids and
\item\label{it:cg2} $G$ is isomorphic to $G_F$.
\end{enumerate}
\end{prop}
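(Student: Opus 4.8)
The plan is to obtain (\ref{it:cg1}) as an instance of Lemma~\ref{lem:MRWex} and (\ref{it:cg2}) by exhibiting an explicit isomorphism $G_F\to G$ that expands each edge of $F$ into the associated path of $E$.

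\emph{For (\ref{it:cg1}).} First I would observe that $G$ is exactly the reduction $X_{F^0}\,G_E\,X_{F^0}$, where $X_{F^0}:=\bigcup_{v\in F^0}Z(v)\subseteq\go_E$: in any representation $(\alpha x,|\alpha|-|\beta|,\beta x)$ of an element of $G_E$ one has $r_E(\alpha)=r_E(\alpha x)$ and $r_E(\beta)=r_E(\beta x)$, so belonging to the pointed groupoid is precisely the requirement that the range and source lie in $X_{F^0}$. Since $\go_E=\bigsqcup_{v\in E^0}Z(v)$ with each $Z(v)$ compact open, $X_{F^0}$ is clopen, with complement $\bigcup_{v\in T^0}Z(v)$. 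By Lemma~\ref{lem:MRWex} it then remains to check that $X_{F^0}$ meets every orbit of $G_E$, and since orbits are shift-tail-equivalence classes this reduces to two cases. If $x\in\go_E$ is a finite path, then $s_E(x)\in E^0_{\sing}\subseteq F^0$, so the unit $s_E(x)$ lies in $X_{F^0}\cap[x]$. If $x=x_1x_2\cdots\in E^\infty$ and $X_{F^0}\cap[x]=\emptyset$, then in particular $r_E(x_j)\notin F^0$ for every $j$, hence $r_E(x_j),\,s_E(x_j)=r_E(x_{j+1})\in T^0$ and $x_j\in T^1$ for every $j$, so $x\in T^\infty$; but then (T2) provides a path $\mu$ with $s_E(\mu)=r_E(x)$ and $r_E(\mu)\in F^0$, whence the unit $\mu x$ lies in $X_{F^0}\cap[x]$ --- a contradiction. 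So $X_{F^0}$ is an abstract transversal and Lemma~\ref{lem:MRWex} yields (\ref{it:cg1}).

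\emph{For (\ref{it:cg2}).} Here the idea is that the bijection $\beta\mapsto e_\beta$ from the set of paths used to define $F^1$ onto $F^1$ ``expands'' to a groupoid isomorphism. I would: (a) define $\iota$ on paths of $F$ by $\iota(e_{\beta^1}\cdots e_{\beta^n})=\beta^1\cdots\beta^n$, and $\iota(e_{\beta^1}e_{\beta^2}\cdots)=\beta^1\beta^2\cdots$ on infinite paths, which is well defined because $s_F(e_{\beta^i})=r_F(e_{\beta^{i+1}})$ forces $s_E(\beta^i)=r_E(\beta^{i+1})$; (b) use $\iota$ to build a bijection $\iota_0:X_F\to X_{F^0}$ of unit spaces --- on $F^\infty$ this is just $\iota$, whereas a finite $F$-path $\xi\in X_F$ is sent to $\iota(\xi)$ when $s_F(\xi)$ is a source or an infinite receiver of $E$, and otherwise to the concatenation of $\iota(\xi)$ with the unique infinite path into $s_F(\xi)$ whose edges other than the first lie in $T^1$ (this exists by (T2) and is unique by (T1), and (T3)--(T4) together with the no-heads hypothesis are what ensure the recipe lands in $X_{F^0}$ and is a bijection onto it); (c) note that $\iota_0$ intertwines shift-tail-equivalence in $F$ with shift-tail-equivalence inside $X_{F^0}$, and use this to lift $\iota_0$ to a bijection $\Phi:G_F\to G$, $\Phi(\alpha\xi,\,|\alpha|-|\eta|,\,\eta\xi)=\big(\iota_0(\alpha\xi),\,|\iota(\alpha)|-|\iota(\eta)|,\,\iota_0(\eta\xi)\big)$; (d) verify that $\Phi$ is independent of the chosen representative, sends composable pairs to composable pairs with $\Phi(gh)=\Phi(g)\Phi(h)$ and $\Phi(g^{-1})=\Phi(g)^{-1}$, and carries each basic compact open bisection $Z_F((\xi,\eta)\setminus P)$ onto the corresponding bisection of $G$, so that $\Phi$ is an isomorphism of topological groupoids.

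I expect step (b) to be the main obstacle. The substantive point is that $\iota_0$ really is a bijection $X_F\to X_{F^0}$: one must show that every element of $X_{F^0}$ decomposes in exactly one way into blocks coming from edges of $F$, possibly terminated by one of the ``forced'' infinite tails inside $T$, and that no unit of $G$ is missed. This is precisely where each of (T1)--(T4), the absence of heads, and the acyclicity of $T$ are used, and it amounts to carrying out the Crisp--Gow collapsing analysis of \cite{CG} at the level of path spaces. Once (b) is in hand, steps (a), (c) and (d) are routine checks against the definitions of $G_E$, $G_F$ and their topologies.
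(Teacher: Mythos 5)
Your treatment of part~(\ref{it:cg1}) is correct and is essentially the paper's argument: you take the clopen transversal $X=\bigcup_{v\in F^0}Z(v)\subseteq \go_E$, identify the pointed groupoid with $XG_EX$, and check that $X$ meets every orbit before invoking Lemma~\ref{lem:MRWex}; your contrapositive version of the orbit argument (if the orbit misses $X$ then the path lies in $T^\infty$ and (T2) gives a contradiction, while finite paths end in $E^0_{\sing}\subseteq F^0$) is a harmless streamlining of the paper's two cases.

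For part~(\ref{it:cg2}), however, your proposal is an outline rather than a proof, and the gap is exactly the part you flag and then defer. You define the expansion map (the paper's $\phi_{\fin}$) and the unit-space map (the paper's $\phi$), but you never establish the two facts on which everything rests, and which consume all of (T1)--(T4), the acyclicity of $T$ and the no-heads hypothesis. First, surjectivity and well-definedness of $\phi$: for a unit $\mu y$ whose tail lies in $T^\infty$ one must show $0<|r_E^{-1}(r_E(y))|<\infty$ (this uses (T4)) and $|r_F^{-1}(r_E(y))|=\infty$; the paper proves the latter by repeatedly applying \cite[Lemma~3.3]{CG} (which needs the no-heads hypothesis) together with (T3) and acyclicity of $T$ to manufacture infinitely many distinct $F$-edges into $r_E(y)$, and the existence and uniqueness of the forced tails $y_v$ comes from \cite[Lemma~3.4(d)]{CG} applied to the sets $B_v$ of~\eqref{eq:B} together with (T1). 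Saying this ``amounts to carrying out the Crisp--Gow collapsing analysis'' is precisely the work that has to be done. Second, your step~(d) is not routine: the fact that $\Phi$ (equivalently $\phi$) carries basic sets to basic sets only shows that $\Phi$ is \emph{open}; continuity is a separate issue, because cylinders of $\go$ based on paths ending at vertices of $T^0$ are not images of cylinders of $\go_F$. The delicate case is a sequence $xz^n$ in $\go_F$ with pairwise distinct first edges converging to a finite path $x$ whose source is an infinite receiver in $F$ but a finite receiver in $E$: one must show $\phi(xz^n)$ eventually enters each $Z(\phi_{\fin}(x)y_1\cdots y_m)$, which the paper deduces from the dichotomy $|B_{s_E(y_j)}|=\infty$ versus $|B_{s_E(e)}|<\infty$ for every other edge $e$ into the same vertex, again via (T1), (T4) and \cite[Lemmas~3.3 and~3.4(d)]{CG}. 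Without these two verifications your map is not known to be a bijection of unit spaces, let alone a homeomorphism, so the isomorphism $G_F\cong G$ has not been established.
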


\begin{rmk}
We will be using \cite[Lemma~3.3]{CG}, which says that if a graph $E$ 
has no heads, satisfies (T1),
(T2) and (T3), and $T$ and $F$ are as above,
then $F^0 \geq v$ for all $v \in T^0$.  Note that this Lemma
also implies that  $r_E^{-1}(v) = \emptyset$ if and only if $r_F^{-1}(v) =
\emptyset$.
\end{rmk}

\begin{proof}
To prove~(\ref{it:cg1}), we will apply Lemma~\ref{lem:MRWex} with $X = \go = F^0
E^\infty$. First notice that
\[
\go = \bigcup_{v \in F^0} Z(v) = \go_E \setminus \Big(\bigcup_{w \in T^0} Z(w)\Big).
\]
Since each $Z(v)$ is open, we deduce that $\go$ is clopen in $\go_E$. Now consider $x \in
\go_E \setminus \go$. We must show that $[x] \cap \go \not= \emptyset$. Since $x \notin
\go$, $r_E(x) \in T^0$. We consider 2 cases.
For the first case, suppose that $\sigma^n(x) \in T^{\infty}$ for some $n$. Then (T2) implies that
there exists $\mu \in E^*$ such that $s_E(\mu) = r_E(x_{n+1})$ and $r_E(\mu) \in F^0$. So
$\mu(\sigma^n(x)) \in [x] \cap \go$.
For the second case, suppose that $\sigma^n(x) \not\in T^\infty$ for all $n$. Since $E^0_{\sing}
\subseteq F^0$, there exists $n$ such that $s_E(x_n) \in F^0$. Hence $\sigma^n(x) \in [x]
\cap \go$.
Now Lemma~\ref{lem:MRWex} implies that $X G_E$ is a $(G, G_E)$-equivalence.

To prove~(\ref{it:cg2}), we first define a map $\phi:\go_F \to \go$, which will
take a little preparation. By construction, $F^1$ is a subset of $E^*$; we write
$\phi_{\fin} : F^1 \to E^*$ for the inclusion map. Since $\phi_{\fin}$ preserves ranges
and sources, we can extend $\phi_{\fin}$ to an injection from $F^*$ to $E^*$ by
\[\phi_{\fin}(\mu) = \phi_{\fin}(\mu_1)\phi_{\fin}(\mu_2) \dots \phi_{\fin}(\mu_{|\mu|}).\]
Again by construction of $F$, we have \[\phi_{\fin}(F^*) = \{\mu \in E^* : r_E(\mu), s_E(\mu)
\in F^0\}.\] We claim that if $v \in F^0$ satisfies $|r_F^{-1}(v)| = \infty$ but
$|r_E^{-1}(v)| < \infty$, then there is a unique infinite path $y_v \in T^\infty$ with
$r_E(y_v) = v$. Indeed, the set
\begin{equation}\label{eq:B}
B_{v} := \{\beta \in E^* \setminus E^0 \mid r_E(\beta)=v, s_E(\beta) \in F^0 \text{ and }
    r_E(\beta_i) \in T^0 \text{ for } 1 \leq i \leq |\beta|\}
\end{equation}
is infinite, and so \cite[Lemma~3.4(d)]{CG} gives such a $y_v$.
That there is a unique such path follows from~(T1).

Define $\phi : \go_F \to \go$ by
\[
\phi(x) = \begin{cases}
    \phi_{\fin}(x_1)\phi_{\fin}(x_2) \dots &\text{ if $x \in F^\infty$;}\\
    \phi_{\fin}(x) &\text{ if $x \in F^*$ and $s_F(x) \in E^0_{\sing}$; and} \\
    \phi_{\fin}(x)y_{s_F(x)} &\text{ if $x \in F^*$, $|r_F^{-1}(s_F(x))| = \infty$,
        and $0 < |r_E^{-1}(s_F(x))| < \infty$.}
\end{cases}
\]
To see that this defines $\phi$ on all $\go_F$ observe that if $x \in \go_F$ belongs to
$F^*$ and $s_F(x) \not\in E^0_{\sing}$, then we have $s_F(x) \in F^0_{\sing} \setminus
E^0_{\sing}$, and since $r_E^{-1}(v) = \emptyset$ if and only if $r_F^{-1}(v) =
\emptyset$, we deduce that $|r_F^{-1}(s_F(x))| = \infty$ and $0 < |r_E^{-1}(s_F(x))| < \infty$.

Since $\phi_{\fin}$ is injective, $\phi$ is also injective. We have 
\begin{align*}
&\phi(F^\infty) = \{x
\in F^0 E^\infty \mid s_E(x_n) \in F^0\text{ for infinitely many $n$}\} \text{ and } \\
&\phi(\{\mu \in F^* : s_F(x) \in E^0_{\sing}\}) = \{\mu \in F^0 E^* : s_E(\mu) \in E^0_{\sing}\}
\end{align*}
 because
$E^0_{\sing} \subseteq F^0$. The complement of these two sets in $\go$ is
\begin{align}
\{x \in F^0 E^\infty &{}\mid s_E(x_i) \not\in F^0\text{ eventually}\} \nonumber\\
    &= \{x \in F^0 E^\infty \mid s_E(x_i) \in T^0 \text{ eventually}\} \nonumber\\
    &= \{\mu y \mid \mu\in F^0 E^* F^0, y \in s_E(\mu) E^\infty, \sigma^1(y) \in T^\infty\}.\label{eq:complement}
\end{align}
Let $\mu y$ be an element of the set~\eqref{eq:complement}.
To see that $\phi$ is surjective, it suffices to show that 
$|r_F^{-1}(r_E(y))| = \infty$, and $0 < |r_E^{-1}(r_E(y))| < \infty$.  For then
$\phi(\phi_{\fin}^{-1}(\mu)) = \mu y$.
Condition~(T4) applied to $e=y_1$ implies that $r_E(y_1)$ is not an infinite receiver in
$E$. We must now show that
$r_F^{-1}(r_E(y_1))$ is infinite. Since $T$ is acyclic, $y$ has no repeating edges or
vertices. Lemma~3.3 of \cite{CG} yields a path $\mu^1 \in E^*$ with $r_E(\mu^1) =
s_E(y_1)$ and $s_E(\mu^1) = v_1 \in F^0$. Since $s_E(\mu^1) \in F^0$, (T3) implies that 
there exists $m_1 < |\mu^1|$ such that $y_j \not\in \{\mu^1_{m_1}, \dots, \mu^1_{|\mu^1|}\}$ 
for all $j$.

Repeating this process for each $n \in \NN$, we obtain distinct paths $\mu^n$ such that
$r_E(\mu^n) = s_E(y_{k_n})$ where $k_n = \sum_{i=1}^{n}(m_i+2)$ and $s_E(\mu^n) \in F^0$.
Now $y_1\dots y_{k_n}\mu^n \in r_F^{-1}(r_E(y))$ for all $n$, and these are distinct
elements of $F^1$, so that $r_F^{-1}(r(y))$ is infinite as required. Therefore, $\phi$ is
surjective. Notice that $\phi$ also preserves concatenation of paths.

Next we show that $\phi$ is a homeomorphism. It takes cylinder sets $Z(\mu)$ in $\go_F$ onto
cylinder sets $Z(\phi_{\fin}(\mu))$ of $\go$, and since it is bijective, it is therefore
open.

To see that $\phi$ is continuous, suppose $x^n \to x$ in $\go_F$.  We consider
the three possibilities for $x$. First, if $x \in F^{\infty}$, then the collection
$\{Z(x_1), Z(x_1x_2), \dots\}$ is a neighbourhood base at $x$ and the collection
\[
    \{\phi(Z(x_1)), \phi(Z(x_1x_2)), \dots\} = \{Z(\phi(x_1)), Z(\phi(x_1x_2)), \dots\}
\]
is a neighbourhood base for $\phi(x)$. So $\phi(x^n)$ converges to $\phi(x)$.

Second, if $x \in F^*$ and $s_F(x)$ is a source, then $\{x\}$ is open in $\go_F$ and hence
$x^n=x$ eventually. Therefore $\phi(x^n)=\phi(x)$ eventually and hence $\phi(x^n)$
converges to $\phi(x)$.

Finally, suppose $x \in F^*$ and $s(x)$ is an infinite receiver. If $x^n$ is eventually
constant then $\phi(x^n)$ converges to $\phi(x)$ as above. So suppose otherwise.  Since
$x^n \in Z(x)$ eventually, we may assume that each
$x^n=xz^n$ where $z^n \in \go_F$.  Also, we have that $\phi(x) = \phi_{\fin}(x) y_{s_E(x)}$.  
Let $B:=Z(\phi_{\fin}(x)y_1 \dots y_m)$ be a basis element containing $\phi(x)$.
Since open sets containing $x$ include sets of the form
\[
 Z(x) \cap \Big(\bigcup_{e \in G} Z(xe)\Big)^c
\]
for finite $G \subseteq r_F^{-1}(s_F(x))$, we may assume that $z^n_1 \not= z^m_1$ for 
$n \not= m$; that is, the first edges of the paths $z^n$ are distinct. Condition~(T4) 
implies that $s_F(x)$ is not an infinite receiver in $E$, so we may also
assume that $\phi(z^n_1) \in E^* \setminus E^1$ for each $n$. So the $\phi(z^n_1)$ are paths in $E$
with range and source in $F^0$ but all other vertices in $T^0$. 
We claim that the distinct paths $\phi(z^n)$  
eventually belong to $Z(y_1y_2 \dots y_m)$. 
Note that \cite[Lemma~3.3]{CG} and (T3) imply
that $|B_{s_E(y_1)}|$ is infinite. Further, for any $e\in
r_E^{-1}(s_F(x)) \setminus \{y_1\}$ we have $|B_{s_E(e)}|<\infty$; for otherwise
\cite[Lemma~3.4(d)]{CG} yields an infinite path that violates~(T1). Hence $\phi(z^n) \in Z(y_1)$
eventually. Similarly, $|B_{s_E(y_2)}|$ is infinite and for any $e\in E^1$ with
$r_E(e)=r_F(y_2)$ we have $|B_{s_E(e)}|<\infty$ so $\phi(z^n) \in Z(y_1y_2)$ for large $n$.
Proceeding in this way, we deduce that for any $m$ we have $\phi(z^n) \in Z(y_1 \dots y_m)$ for
large $n$ as claimed. So $\phi(x(z_n)) \in B$ for large $n$. Thus, $\phi$ is continuous and hence
$\phi$ is a homeomorphism.


Define $\Phi: G_F \to G$ by
\[
\Phi(\mu x,|\mu|-|\nu|,\nu x)
    = (\phi(\mu x),|\phi_{\fin}(\mu)| - |\phi_{\fin}(\nu)|,\phi(\nu x)).
\]
Since $\phi$ preserves concatenation of paths, $\Phi$ is a groupoid homomorphism and it is
straight-forward to show that $\Phi$ is bijective using that $\phi$ is bijective. We have
\[
    \Phi( Z(\mu, \nu) ) = Z(\phi_{\fin}(\mu), \phi_{\fin}(\nu))
\]
for all $\mu,\nu \in F^*$.  So $\Phi$ takes basic open sets in $G_F$ to basic open sets
in $G$, and hence $\Phi$ is open.

To see that $\Phi$ is continuous, suppose $\gamma_n$ converges to $\gamma = (\mu x,k, \nu
x) \in G_F$. So for a basis element
\[
B:= Z(\mu x_1 \dots x_m , \nu x_1 \dots x_m) \cap
    \Big(\bigcup_{\alpha \in F} Z(\mu x_1 \dots x_n \alpha, \nu x_1 \dots x_m \alpha)\Big)^c
\]
containing $\gamma \in G_F$, we eventually have $\gamma_n \in B$. So for large $n$, the element
$\gamma_n$ has the form
\[
    \gamma_n = (\mu x_1 \dots x_m y^n, k, \nu x_1 \dots x_m y^n) \text{ for }y^n \in \go_F.
\]
Thus eventually we have
\[
\Phi(\gamma_n) = (\phi((\mu x_1 \dots x_m y^n),
                    |\phi_{\fin}(\mu)|-|\phi_{\fin}(\nu)|, \phi(\nu x_1 \dots x_m y^n)),
\]
which converges to $(\phi(\mu x), |\phi_{\fin}(\mu)|-|\phi_{\fin}(\nu)|, \phi(\nu x))
= \Phi(\gamma)$.
\end{proof}

\begin{cor}\label{cor:ME}
Suppose $E$ and $F$ are as in Proposition~\ref{prop:cg} and $R$ is a commutative unital ring. Then
\begin{enumerate}
\item\label{it1:corME} $L_R(E)$ is Morita equivalent to $L_R(F)$; and
\item\label{it2:corME} If $E$ is countable, then $\cs(E)$ is Morita equivalent to $\cs(F)$.
\end{enumerate}
\end{cor}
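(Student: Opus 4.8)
The plan is to chain together the groupoid-level results already established, so there is essentially no new content. For part~(\ref{it1:corME}), Proposition~\ref{prop:cg} supplies a pointed groupoid $G$ that is equivalent to $G_E$ and isomorphic to $G_F$. Since $E$ and $F$ are directed graphs, $G_E$ and $G_F$ are Hausdorff ample by Example~\ref{ex:graphgroupoid}, and $G$ is a clopen subgroupoid of $G_E$ by Lemma~\ref{lem:MRWex}, hence itself Hausdorff ample (its unit space $F^0 E^\infty$ is a clopen, and so totally disconnected, subspace of $\go_E$, while local compactness, Hausdorffness and the \'etale property pass to the subgroupoid). Theorem~\ref{thm:lme}, applied to this equivalence, then gives that $A_R(G)$ and $A_R(G_E)$ are Morita equivalent. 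The isomorphism $G \cong G_F$ gives $A_R(G) \cong A_R(G_F)$, and Example~\ref{ex:lpa} identifies $A_R(G_E) \cong L_R(E)$ and $A_R(G_F) \cong L_R(F)$. Composing these isomorphisms with the Morita equivalence shows that $L_R(E)$ and $L_R(F)$ are Morita equivalent.

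For part~(\ref{it2:corME}), assume in addition that $E$ is countable; then $F$ is countable too, and $G_E$, $G_F$ and $G$ are all second-countable, locally compact, Hausdorff and \'etale, each carrying its canonical Haar system of counting measures. The $(G,G_E)$-equivalence $Z$ produced by Lemma~\ref{lem:MRWex} (with $X = F^0 E^\infty$) is a groupoid equivalence in the Muhly--Renault--Williams sense, so by \cite{MRW87} the groupoid \cs-algebras $\cs(G)$ and $\cs(G_E)$ are Morita equivalent. Since $G \cong G_F$ as topological groupoids, $\cs(G) \cong \cs(G_F)$, and the standard identification of a graph \cs-algebra with the \cs-algebra of its path groupoid (see \cite{KPRR, Pat2002}) gives $\cs(G_E) \cong \cs(E)$ and $\cs(G_F) \cong \cs(F)$. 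Chaining these isomorphisms with the Morita equivalence proves that $\cs(E)$ and $\cs(F)$ are Morita equivalent.

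There is no genuine obstacle: all the real work was carried out in Proposition~\ref{prop:cg} together with Theorem~\ref{thm:lme} (respectively \cite{MRW87}). The only points that warrant a moment's attention are verifying that $G$ inherits the Hausdorff ample --- and, for part~(\ref{it2:corME}), the second-countable \'etale --- structure from $G_E$, which is routine for a clopen subgroupoid, and noting that the countability hypothesis on $E$ is precisely what is needed to invoke the Muhly--Renault--Williams theorem.
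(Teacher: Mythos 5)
Your proposal is correct and follows essentially the same route as the paper: combine Proposition~\ref{prop:cg} with Theorem~\ref{thm:lme} (respectively the Muhly--Renault--Williams theorem) and the identifications $L_R(E) \cong A_R(G_E)$ and $\cs(E) \cong \cs(G_E)$. Your extra care in checking that the pointed groupoid $G$ is Hausdorff ample (and second countable when $E$ is countable), and in routing the argument through the isomorphism $G \cong G_F$ rather than stating the equivalence of $G_E$ and $G_F$ directly, only makes explicit what the paper leaves implicit.
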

\begin{proof}  Proposition~\ref{prop:cg} implies that $G_E$ and $G_F$ are 
equivalent groupoids.  

Now for~(\ref{it1:corME}), Theorem~\ref{thm:lme} implies that $A_R(G_E)$ and
$A_R(G_F)$ are Morita equivalent, and the result follows from Example~\ref{ex:lpa}.

For~(\ref{it2:corME}), observe that since $E$ is countable, $G_E$ is second countable 
and hence $\cs(G_E)$ is Morita equivalent to $\cs(G_F)$ by \cite[Theorem~2.8]{MRW87}.   
We have $\cs(G_E) \cong \cs(E)$ and $\cs(G_F) \cong \cs(F)$ by \cite[Corollary~3.9]{Pat2002},
and the result follows.  
\end{proof}

\begin{rmk}  Corollary~\ref{cor:ME}(\ref{it1:corME}) generalises
\cite[Proposition~1.11]{ALPS}.
Our proof of Corollary~\ref{cor:ME}(\ref{it2:corME}) provides an alternative proof of
\cite[Theorem~3.1]{CG}.
\end{rmk}

\begin{rmk}S{\o}rensen's move (I) of \cite[Theorem~3.5]{S} is a special case of Bates and Pask's construction
``insplitting'' in \cite[Theorem~5.3]{BP}; a Leavitt path algebra version of this is
proved in \cite[Proposition~1.14]{ALPS}.  In this setting, the corresponding algebras are
actually stably isomorphic.  Both  \cite[Theorem~3.5]{S} and
\cite[Proposition~1.14]{ALPS} can be proved via Steinberg algebras by showing that the
corresponding groupoids are isomorphic.  This was done in the row-finite case by Drinen
in \cite[Proposition~6.1.3]{Drinen}.
\end{rmk}



\end{document}